\newcommand\version{September 26, 2025}
\newtheorem{theorem}{Theorem}%[section]
\newtheorem{proposition}[theorem]{Proposition}
\newtheorem{lemma}[theorem]{Lemma}
\theoremstyle{definition}
\theoremstyle{remark}
\newtheorem{remark}[theorem]{Remark}
\renewcommand{\epsilon}{\varepsilon}
\newcommand{\loc}{{\rm loc}}
\newcommand{\N}{\mathbb{N}}
\renewcommand{\phi}{\varphi}
\newcommand{\R}{\mathbb{R}}
\begin{document}

\title[Openness of shape optimizers]{Openness of shape optimizers\\ in higher-order and non-scalar problems}

\author{Rupert L. Frank}
\address[Rupert L. Frank]{Mathe\-matisches Institut, Ludwig-Maximilians Universit\"at M\"unchen, The\-resienstr.~39, 80333 M\"unchen, Germany, and Munich Center for Quantum Science and Technology, Schel\-ling\-str.~4, 80799 M\"unchen, Germany, and Mathematics 253-37, Caltech, Pasa\-de\-na, CA 91125, USA}
\email{r.frank@lmu.de}

\thanks{\version\\
	\copyright\, 2025 by the author. This paper may be reproduced, in its entirety, for non-commercial purposes.}

\begin{abstract}
	We consider the first eigenvalues of the polyharmonic, Lam\'e and Stokes operators with Dirichlet boundary conditions on sets of given finite measure. It is shown that a quasi-open set for which this eigenvalue is minimal is open. This removes dimensional restrictions in earlier works. We use Campanato theory, which works well in the present higher order or non-scalar setting.
\end{abstract}

\maketitle

\section{Introduction and main result}

Often in the calculus of variation, when trying to prove the existence of a minimizer for some functional, one relaxes the class of admissible configurations, so that the new class has better compactness properties and a minimizer in this enlarged class is easier to find. The prize to be paid for this convenience is an extra step in the proof where one needs to show that the minimizer, which initially only belongs to the new, enlarged class, in fact lies in the original class of configurations. This is often related to regularity considerations.

Specifically, we are interested in shape optimization problems in the class of open subsets of Euclidean space. When the set-functional to be minimized involves Sobolev spaces, a frequently used relaxation is to quasi-open sets. These are sets that can be approximated from the outside by open sets in the sense of a certain capacity (depending on the Sobolev space under consideration). Using tools from concentration compactness, it is often possible to show the existence of a minimizing set in the class of quasi-open sets; see, e.g., \cite{Bu} for a survey. In this paper we are concerned with the remaining regularity step, which consists in showing that the quasi-open set is open.

We are motivated by two recent papers, \cite{Le,HeLePr}, where openness of quasi-open minimizers was shown only in small dimensions. Our goal is to remove this dimensional restriction. The difficulty is that these problems involve Sobolev spaces of order larger than one and Sobolev spaces of vector-valued functions, respectively. In both cases tools based on the maximum principle are not available, thus precluding an approach to showing openness that works well in the case of a Sobolev space of first order for scalar functions.

We shall treat simultaneously the shape optimization problems from \cite{Le,HeLePr}, as well as \cite{HeMaPr}. They are referred to as Problems I, II and III and concern the lowest eigenvalue of the polyharmonic, the Lam\'e and the Stokes operator, respectively. For an open set $\Omega\subset\R^d$, $d\geq 2$, we consider the energy space
$$
V(\Omega) :=
\begin{cases}
	H^m_0(\Omega,\R) & \text{in Problem I} \,,\\
	H^1_0(\Omega,\R^d) & \text{in Problem II} \,,\\
	\{ u\in H^1_0(\Omega,\R^d):\ \nabla\cdot u = 0 \} & \text{in Problem III} \,.
\end{cases}
$$
Here $m\in\N$ is an integer (the case $m\geq 2$ being the new one) and $\nabla\cdot$ denotes the divergence. In what follows it will be convenient to set $m=1$ in Problems II and III. Associated to each element $u\in V(\Omega)$ is an energy density
$$
\tau(u) :=
\begin{cases}
	 \sum_{i\in\{1,\ldots,d\}^m} |\partial_{i_1}\cdots\partial_{i_m} u|^2 & \text{in Problem I} \,,\\
	|\nabla u|^2 + c_0(\nabla\cdot u)^2 & \text{in Problem II} \,,\\
	|\nabla u|^2 & \text{in Problem III} \,.
\end{cases}
$$
Here $c_0\in\R_+=(0,\infty)$ is a fixed parameter. We are interested in
\begin{equation}
	\label{eq:deflambdaomega}
	\Lambda(\Omega) := \inf_{0\neq u\in V(\Omega)} \frac{\int_\Omega \tau(u) \,dx}{\int_\Omega |u|^2\,dx} \,.
\end{equation}
When attained, $\Lambda(\Omega)$ gives the smallest eigenvalue on $\Omega$ of the polyharmonic operator in Problem I, the Lam\'e operator in Problem II and the Stokes operator in Problem~III, each considered with Dirichlet boundary conditions.

The shape optimization problem under consideration consists in \emph{minimizing $\Lambda(\Omega)$ among all open sets $\Omega\subset\R^d$ of given measure}. In view of the scaling behavior
\begin{equation}
	\label{eq:scaling}
	\Lambda(t\Omega) = t^{-2m} \Lambda(\Omega) 
	\qquad\text{for all}\ t>0
\end{equation}
the specific value of the measure that is fixed is irrelevant.

The definition of the space $V(\Omega)$, and consequently the definition of $\Lambda(\Omega)$, can be extended to quasi-open sets $\Omega$. The notion `quasi-open' depends on $m$ and its precise definition is not relevant here. It easily follows from Sobolev embedding theorems that $|\Omega|^{2m/d}\Lambda(\Omega)$ is bounded from below by a positive constant that depends only on $d$ and~$m$.

It is shown in \cite{Le} for Problem~I, in \cite{HeMaPr} for Problem II and in \cite{HeLePr} for Problem III that there is a set minimizing $\Lambda$ among all quasi-open sets of given measure. (In \cite{HeMaPr} only the case of dimensions $d=2,3$ is considered, but their argument generalizes to arbitrary $d$.) For any quasi-open set $\Omega$ of finite measure and, in particular, for any quasi-open volume constrained minimizer the embedding $V(\Omega)\subset L^2(\Omega)$ is compact and therefore $\Lambda(\Omega)$ is an eigenvalue and the infimum in \eqref{eq:deflambdaomega} is attained by a corresponding eigenfunction. Studying the regularity of this eigenfunction will be what gives us regularity of the underlying set.

The following is our main result.

\begin{theorem}\label{main}
	Let $d\geq 2$ and let $\Omega\subset\R^d$ be a quasi-open volume constrained minimizer for $\Lambda$ in one of Problems I, II or III. Then, up to a set of measure zero, $\Omega$ is open and any eigenfunction $u$ of $\Lambda(\Omega)$, extended by zero to $\R^d$, belongs to $C^{m-1,\alpha}(\R^d)$ for any $\alpha<1$ and we have $D^k u\in L^\infty(\R^d)$ for any $k\in\{0,\ldots,m-1\}$.
\end{theorem}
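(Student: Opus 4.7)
My plan is to transfer regularity from the set $\Omega$ to the eigenfunction $u$ via a quasi-minimality property, and then to extract Hölder regularity of $D^{m-1}u$ from Campanato decay estimates for the relevant free elliptic operator. The first step is to eliminate the volume constraint. Using the scaling \eqref{eq:scaling}, minimizing $\Lambda$ on sets of fixed measure is equivalent to minimizing the scale-invariant functional $\omega\mapsto\Lambda(\omega)|\omega|^{2m/d}$ on all quasi-open sets. A standard Lagrange multiplier argument (in the style of Bucur) then leads to an unconstrained penalized minimality of the form
\[
\Lambda(\Omega) + \Lambda^* |\Omega| \leq \Lambda(\omega) + \Lambda^* |\omega|
\]
for some $\Lambda^*>0$ and all quasi-open $\omega$. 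Translated through the Rayleigh quotient \eqref{eq:deflambdaomega} and applied to the eigenfunction $u$ extended by zero, this becomes the key variational inequality: for every $v\in V(\R^d)$,
\[
\int_{\R^d} \tau(u)\,dx - \Lambda(\Omega)\int_{\R^d}|u|^2\,dx \leq \int_{\R^d}\tau(v)\,dx - \Lambda(\Omega)\int_{\R^d}|v|^2\,dx + \Lambda^*\bigl|\{v\neq 0\}\setminus\{u\neq 0\}\bigr|.
\]

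Next I would run a Campanato comparison. Fix $x_0\in\R^d$ and $r>0$ small, and let $h$ solve the homogeneous operator equation on $B_r(x_0)$ (polyharmonic, Lam\'e, or Stokes with zero right-hand side) with $h-u\in V(B_r(x_0))$; extend $h$ by $u$ outside $B_r(x_0)$. Using $v=h$ in the variational inequality above, and noting that $\{h\neq 0\}\setminus\{u\neq 0\}\subset B_r(x_0)$ has measure at most $Cr^d$, one obtains an estimate
\[
\int_{B_r(x_0)} \tau(u-h)\,dx \leq C r^d + C r^2 \int_{B_r(x_0)}|u|^2\,dx ,
\]
after absorbing the $L^2$ terms through a Poincar\'e-type inequality on $V(B_r)$. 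Classical Campanato decay for $h$ — standard for the polyharmonic operator in Problem~I, and available for the Lam\'e and Stokes systems via ellipticity together with Korn's inequality and Ne\v{c}as-type pressure estimates respectively — then gives
\[
\int_{B_\rho(x_0)} |D^{m-1}u - (D^{m-1}u)_{B_\rho}|^2\,dx \leq C\rho^{d+2\alpha}
\]
for all $0<\rho\leq r$ and all $\alpha<1$. Campanato's characterization of Hölder spaces then delivers $D^{m-1}u\in C^{0,\alpha}$, hence $u\in C^{m-1,\alpha}(\R^d)$ for every $\alpha<1$. Boundedness of $D^k u$ for $0\leq k\leq m-1$ follows by iteration, using that $u$ is compactly supported and that the higher-order derivatives are continuous.

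To conclude openness, I would note that since $u\in C^{m-1,\alpha}\subset C^0$, the set $\{u\neq 0\}$ is open. The inclusion $\{u\neq 0\}\subset\Omega$ holds quasi-everywhere because $u\in V(\Omega)$. The reverse inclusion modulo null sets is a minimality argument: if $|\Omega\setminus\{u\neq 0\}|>0$, then $u$ is an eigenfunction with eigenvalue $\Lambda(\Omega)$ on the strictly smaller quasi-open set $\{u\neq 0\}$, and rescaling that set to the prescribed measure strictly decreases the eigenvalue by \eqref{eq:scaling}, contradicting minimality. Hence, up to a null set, $\Omega=\{u\neq 0\}$ is open.

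The hard part will be the Campanato decay step at points $x_0$ where $\{u\neq 0\}\cap B_r(x_0)$ is very thin. In the classical scalar first-order setting, thinness of the positivity set is controlled via maximum-principle arguments that bound $u$ pointwise by the distance to its zero set — tools that are unavailable for polyharmonic operators or for vector-valued systems, and for which even constructing a suitable pointwise competitor is delicate. The virtue of the Campanato approach is precisely that Hölder regularity is extracted directly from energy decay without recourse to any pointwise comparison, so the technical core of the argument lies in verifying the comparison-with-$h$ estimate uniformly across all three settings and in choosing the right class of test functions respecting the support and (in Problem~III) divergence-free constraints.
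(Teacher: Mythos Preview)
Your overall architecture---penalized minimality, Campanato comparison against a harmonic replacement $h$, and the openness-from-continuity conclusion---matches the paper's. The gap is in the middle, and it is exactly the point of the theorem.

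From the comparison you obtain (with the correct exponent $r^{2m}$ rather than $r^2$ in Problem~I)
\[
\int_{B_r}\tau(u-h)\,dx \leq Cr^d + Cr^{2m}\int_{B_r}|u|^2\,dx.
\]
The Poincar\'e inequality on $V(B_r)$ applies only to $u-h\in V(B_r)$, not to $u$; there is no way to ``absorb'' $\int_{B_r}|u|^2$. A priori one only has $u\in L^2(\R^d)$, so $\int_{B_r}|u|^2=O(1)$ and the error is merely $O(r^{2m})$. Feeding this into the Campanato iteration gives $D^m u\in L^{2,2m}$, which yields H\"older continuity of $D^{m-1}u$ only when $2m+2>d$. (Using Sobolev embedding once to put $u\in L^{2d/(d-2m)}$ pushes this to roughly $d\le 4m$, which is precisely the restriction in \cite{Le}.) In higher dimensions the single pass you describe stalls, and the jump from your displayed error bound to the claimed $\rho^{d+2\alpha}$ decay is unjustified.

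What closes the gap in the paper is a bootstrap in the Morrey scale: from $D^m u\in L^{2,2m}$ one deduces via Poincar\'e that $u\in L^{2,4m}$, hence $\int_{B_r}|u|^2\le Cr^{4m}$, hence the comparison error improves to $O(r^{6m})$, hence $D^m u\in L^{2,6m}$, and so on. After finitely many passes one reaches $D^m u\in L^{2,\mu}$ for every $\mu<d$, and only then does Campanato's characterization give $D^{m-1}u\in C^{0,\alpha}$ for all $\alpha<1$. This iteration is precisely what removes the dimensional restriction, and it is missing from your plan.

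One minor point on the openness step: in Problem~I with $m\ge 2$ the open set to take is $\bigcup_{k=0}^{m-1}\{D^k u\neq 0\}$ rather than $\{u\neq 0\}$, since membership of $u$ in $H^m_0(\tilde\Omega)$ requires all derivatives up to order $m-1$ to vanish on the complement.
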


This result is proved in \cite{Le} for Problem I under the additional restriction $d \leq 4m$ and in \cite{HeLePr} for Problem II under the additional restriction $d\leq 3$. (Our result also improves the range of H\"older exponents in Problem II for $d=3$ from \cite{HeLePr}.)
Thus, our result proves openness \emph{without any dimensional restriction}.

We emphasize that the result is nontrivial: not every quasi-open set is equal, up to a set of measure zero, to an open set; see \cite[Exercise 3.6]{HePi} for an example.

We do not expect that eigenfunctions $u$ for $\Lambda(\Omega)$ with an optimal set $\Omega$ belong to $C^m(\R^d)$ (because we do not expect $D^m u$ to vanish on $\partial\Omega$). It is natural to wonder whether they belong to $C^{m-1,1}(\R^d)$. This is known in variations of this problem in the scalar case for $m=1$ (see \cite{BrHaPi} and also \cite{BuMaPrVe,LaPi}), but we do not see how to extend these techniques to the higher-order or non-scalar cases. The best we can get with our techniques is that $D^m u$ belongs to a BMO-type space; see Remark \ref{rem:bmo}.

The method that we use to prove Theorem \ref{main} and that we describe momentarily is not restricted to our three problems. It works for mixed order elliptic systems that are translation invariant and satisfy the Legendre--Hadamard ellipticity condition, as considered for instance in \cite{GiMo}. This constitutes a common generalization of Problems~I and II. Since existence of quasi-open volume constrained minimizers has not been proved in this setting, we refrain from stating our regularity results in this generality.

%%%%%%%%%%%%

\subsection*{Strategy of the proof}

Given $J:V(\R^d)\to \R$ and constants $C,c,R>0$, we say that $u\in V(\R^d)$ is a \emph{local quasiminimizer} of $J$ with parameters $(C,c,R)$ if for any $a\in\R^d$, $r\in(0,R]$ and $v\in V(\R^d)$ with $v-u\in V(B_r(a))$ and $\int_{B_r(a)} \tau(v-u)\,dx \leq c$ one has
$$
J(v) \geq J(u) - C r^d \,.
$$
We simply say that $u$ is a local quasiminimizer if the values of the constants $C,c,R$ are irrelevant. This notion appears for example in \cite{BuMaPrVe,Le}.

For $f\in V'(\R^d)$, the dual space of $V(\R^d)$, we introduce the functional
$$
J_f(v):= \int_{\R^d}\tau(u)\,dx - 2 \langle f,u \rangle \,.
$$

\begin{proposition}\label{quasimin}
	Let $\Omega\subset\R^d$ be a quasi-open volume constrained minimizer for $\Lambda$ in one of Problems I, II or III and let $u$ be an $L^2$-normalized eigenfunction of $\Lambda(\Omega)$, extended by zero to $\R^d$. Then $u$ is a local quasiminimizer of $J_f$ with $f=\Lambda(\Omega)\,u$.
\end{proposition}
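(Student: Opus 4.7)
\smallskip

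\noindent\textbf{Plan for Proposition \ref{quasimin}.} The natural competitor to use, given $a\in\R^d$ and $r\in(0,R]$, is the quasi-open set $\tilde\Omega:=\Omega\cup B_r(a)$. Since $u$ vanishes quasi-everywhere outside $\Omega$ and $v-u$ vanishes quasi-everywhere outside $B_r(a)$, the function $v=u+(v-u)$ lies in $V(\tilde\Omega)$, so the variational characterization of $\Lambda(\tilde\Omega)$ applied to $v$ yields
$$
\Lambda(\tilde\Omega) \int_{\R^d} |v|^2\,dx \leq \int_{\R^d}\tau(v)\,dx.
$$
The first step is therefore to show that $\Lambda(\tilde\Omega)$ is not much smaller than $\Lambda(\Omega)$.

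For this I would exploit volume-constrained minimality together with the scaling \eqref{eq:scaling}: rescaling $\tilde\Omega$ to have the same measure as $\Omega$ and then comparing gives
$$
\Lambda(\tilde\Omega) \geq \bigl(|\Omega|/|\tilde\Omega|\bigr)^{2m/d}\,\Lambda(\Omega)
= \bigl(1+(|\tilde\Omega|-|\Omega|)/|\Omega|\bigr)^{-2m/d} \Lambda(\Omega).
$$
Bernoulli's inequality together with the trivial bound $|\tilde\Omega|-|\Omega|\leq \omega_d r^d$ then produces
$$
\Lambda(\tilde\Omega) \geq \Lambda(\Omega) - C_1 r^d
$$
for a constant $C_1$ depending only on $d,m,|\Omega|$ and $\Lambda(\Omega)$.

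The rest is book-keeping. Using $\|u\|_{L^2}^2=1$ and $\int\tau(u)\,dx=\Lambda(\Omega)$, the identity
$$
J_f(v)-J_f(u) = \int_{\R^d}\tau(v)\,dx - \Lambda(\Omega)\int_{\R^d}|v|^2\,dx + \Lambda(\Omega)\int_{\R^d}|v-u|^2\,dx
$$
holds by direct expansion, and combining it with the two displays above gives $J_f(v)-J_f(u)\geq -C_1 r^d \int|v|^2\,dx$. To close the argument I would use the Poincaré/Friedrichs inequality on $w:=v-u\in V(B_r(a))$, which in all three cases yields $\|w\|_{L^2(B_r(a))}^2 \leq C_2 r^{2m}\int_{B_r(a)}\tau(w)\,dx \leq C_2 r^{2m} c$; together with $v=u$ outside $B_r(a)$ this bounds $\|v\|_{L^2}^2\leq 2+2C_2 R^{2m}c$ by a universal constant, giving the claimed $J_f(v)\geq J_f(u)-Cr^d$.

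The step that needs the most care is the first one: verifying that $\tilde\Omega$ really is an admissible quasi-open competitor and that $v$ lies in $V(\tilde\Omega)$ in each of Problems~I--III (in particular, that the quasi-everywhere support of $v$ is contained in $\Omega\cup B_r(a)$, and, in Problem~III, that the divergence-free constraint is preserved on the enlarged set). Once this, together with the definition of $\Lambda$ on quasi-open sets, is in place, the scaling/minimality argument and the algebra leading to the quasiminimality inequality are essentially forced.
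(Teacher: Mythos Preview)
Your proposal is correct and follows essentially the same route as the paper's proof: use the competitor $\Omega\cup B_r(a)$, combine scaling \eqref{eq:scaling} with volume-constrained minimality and Bernoulli's inequality to get $\Lambda(\Omega\cup B_r(a))\geq \Lambda(\Omega)-C_1 r^d$, expand $J_f(v)-J_f(u)$ exactly as you did, and finish by bounding $\|v\|_{L^2}$ via Poincar\'e on $v-u\in V(B_r(a))$ together with the constraint $\int\tau(v-u)\,dx\leq c$. The paper writes the Poincar\'e step as $\|v-u\|_{L^2}\leq \Lambda(B_r)^{-1/2}(\int\tau(v-u))^{1/2}$ and dispatches the admissibility of $v\in V(\Omega\cup B_r(a))$ in one line, but otherwise the arguments are identical.
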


The proof of this proposition is rather standard; see for instance \cite{BuMaPrVe}. For Problem~I it appears in \cite[Proof of Theorem~5]{Le}. For the sake of completeness we provide the details in Section \ref{sec:quasimin}.

We now discuss the regularity of local quasiminimizers of functions $J_f$ in dependence on the regularity of $f$. This is most naturally carried out in terms of the scale of Morrey spaces. The Morrey space $L^{2,\lambda}(\R^d)$ with parameter $0\leq \lambda\leq d$ consists of all $f\in L^2_\loc(\R^d)$ such that
$$
\| f \|_{L^{2,\lambda}(\R^d)} := \sup_{0<r\leq r_0\,,\, a\in\R^d} \left( r^{-\lambda} \int_{B_r(a)} |f(x)|^2\,dx \right)^{1/2} <\infty \,.
$$
Here $r_0>0$ is a (finite) constant. A covering argument shows that different choices of $r_0$ lead to equivalent norms. The same definition applies when $f$ is vector- or matrix-valued.

\begin{proposition}\label{regularity}
	Let $0\leq \lambda\leq d$ and $f\in L^{2,\lambda}(\R^d)$. If $u$ is a local quasiminimizer for $J_f$, then 
	$$
	D^m u\in L^{2,\mu}(\R^d)
	\qquad
	\begin{cases}
		\text{with}\ \mu=\lambda +2m & \text{if}\ \lambda+2m<d \,,\\
		\text{with any}\ \mu<d & \text{if}\ \lambda + 2m \geq d \,.
	\end{cases}
	$$
\end{proposition}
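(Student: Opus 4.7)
The plan is to use a classical Campanato-type perturbation argument. On each ball $B_r(a)$ I will compare $u$ with the minimizer $w$ of $\int_{B_r(a)}\tau(\cdot)\,dx$ among functions satisfying $w-u\in V(B_r(a))$; this $w$ weakly solves the associated homogeneous constant-coefficient problem on $B_r(a)$ (polyharmonic of order $m$, Lam\'e, or Stokes). Interior regularity for each of these homogeneous systems yields pointwise $L^\infty$-bounds on $D^m w$ in smaller balls in terms of its $L^2$-mean on $B_r(a)$, and hence the Campanato-type decay
\[
\int_{B_\rho(a)} |D^m w|^2\,dx \leq C_0 \left( \frac{\rho}{r} \right)^d \int_{B_r(a)} |D^m w|^2\,dx, \qquad 0<\rho\leq r.
\]

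Since $\tau(v)$ is a coercive quadratic form in $D^m v$ on $V(B_r(a))$, the Euler--Lagrange equation for $w$ produces the orthogonality identity $\int_{B_r(a)}\tau(u)\,dx = \int_{B_r(a)}\tau(w)\,dx + \int_{B_r(a)}\tau(u-w)\,dx$. Applying the local quasiminimality from Proposition \ref{quasimin} with test function $v=w$ (the constraint $\int_{B_r(a)}\tau(w-u)\,dx\leq c$ holds for $r$ small, since $\int\tau(u)\,dx$ is absolutely continuous) yields
\[
\int_{B_r(a)}\tau(u-w)\,dx \leq Cr^d + 2\langle f, u-w\rangle.
\]
Combining the ellipticity $c_1|D^m v|^2 \leq \tau(v) \leq C|D^m v|^2$, Cauchy--Schwarz, the Poincar\'e-type bound $\|u-w\|_{L^2(B_r(a))} \leq Cr^m \|D^m(u-w)\|_{L^2(B_r(a))}$ (valid since $u-w\in V(B_r(a))$), the Morrey estimate $\|f\|^2_{L^2(B_r(a))} \leq r^\lambda \|f\|^2_{L^{2,\lambda}(\R^d)}$, and Young's inequality for absorption then delivers
\[
\int_{B_r(a)}|D^m(u-w)|^2\,dx \leq C\bigl( r^d + r^{2m+\lambda} \bigr).
\]

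Putting the two displays together via $|D^m u|^2\leq 2|D^m w|^2 + 2|D^m(u-w)|^2$ and the comparison $\int_{B_r(a)}|D^m w|^2\,dx \leq C\int_{B_r(a)}|D^m u|^2\,dx$ (which follows from minimality of $w$ and ellipticity) produces the iterative inequality
\[
\Phi(\rho) \leq C_1 \left( \frac{\rho}{r} \right)^d \Phi(r) + C_2 \bigl( r^d + r^{2m+\lambda} \bigr), \qquad \Phi(s) := \int_{B_s(a)} |D^m u|^2 \,dx,
\]
valid for $0<\rho\leq r\leq R_0$ uniformly in $a$. The standard Campanato--Morrey iteration lemma then yields $\Phi(\rho)\leq C\rho^\mu$ with $\mu=\lambda+2m$ when $\lambda+2m<d$, and with any $\mu<d$ otherwise (in the second regime $r^{2m+\lambda}$ is absorbed into $r^d$ using $r\leq R_0$); this is exactly $D^m u\in L^{2,\mu}(\R^d)$. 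The main obstacle I anticipate is establishing the decay estimate in the first display separately for each of the three homogeneous operators; for Problem III one must additionally ensure that $w$ can be chosen divergence-free so that $w-u$ lies in $V(B_r(a))$ and the orthogonality identity remains valid.
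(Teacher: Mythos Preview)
Your proposal is correct and follows the same Campanato perturbation scheme as the paper: split $u$ on each ball into a ``harmonic'' comparison and a correction supported in $V(B_r(a))$, use interior decay for the former, bound the latter by $r^d+r^{2m+\lambda}$, and feed the resulting inequality into the standard iteration lemma. Your decomposition is the mirror image of the paper's (you call the harmonic piece $w$ and the correction $u-w$; the paper calls the correction $w\in V(B_r(a))$ and the harmonic piece $v=u-w$), but the two are literally the same splitting up to relabeling.

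The only substantive difference is how the correction term is controlled. The paper first isolates a general ``approximate Euler--Lagrange'' inequality (Lemma~\ref{quasiminbasic}) from quasiminimality, namely $\bigl|\int\tau(u,\phi)-\langle f,\phi\rangle\bigr|\le C' r^{d/2}\|\phi\|$, and then specializes to $\phi$ equal to the correction. You instead plug the harmonic comparison directly into the quasiminimality inequality $J_f(w)\ge J_f(u)-Cr^d$ and use the Pythagorean identity $\int\tau(u)=\int\tau(w)+\int\tau(u-w)$ to extract $\int\tau(u-w)\le Cr^d+2\langle f,u-w\rangle$. Your route is marginally more direct and avoids the auxiliary lemma; the paper's route has the small advantage that the constraint $\int\tau(v-u)\le c$ is handled once and for all inside Lemma~\ref{quasiminbasic} via the scaling parameter $\gamma$, whereas you need the (correct) absolute-continuity argument to get a uniform $R_0$. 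Both are fine, and both land on the identical iteration inequality~\eqref{eq:iterationineq}. Your closing remark about Problem~III is exactly the content of the paper's Lemma~\ref{harmonicstokes}.
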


The proof of this proposition is the content of Section \ref{sec:regularity}.

Given Propositions \ref{quasimin} and \ref{regularity}, our main result, Theorem \ref{main}, follows by a bootstrap argument in the scale of Morrey spaces. We provide the details of this argument in Section \ref{sec:proofmain}.

To prove Proposition \ref{regularity} we use an approach to regularity theory due to Campanato, which is well-known to work in higher order and non-scalar settings. We refer to \cite[Chapter 5]{GiMa} for an introduction to this technique.

Campanato theory is also used in the papers \cite{Le} and \cite{HeLePr} that motivated our work and, indeed, from a wider perspective our techniques are similar to those employed there. There are, however, some important differences in the details that allow us to obtain sharper results and which, we hope, will be useful in other, related shape optimization problems. The notion of local quasiminimality that we adopt is used in \cite{Le}, where also a version of Proposition \ref{regularity} for $f$ in an $L^p$ space is proved. Thus, the novelty of our approach compared to \cite{Le} is the use of Morrey spaces, as well as the idea of repeatedly applying Proposition \ref{regularity} to improve the regularity. In \cite{HeLePr} a different notion of quasiminimality is used, which does not involve an inhomogenity $f$ and therefore seems not to be amenable to a bootstrap procedure.

Finally, we mention the work \cite{St} concerning the volume constrained minimization of the buckling eigenvalue, where also openness of optimal sets and the $C^{m-1,\alpha}$ regularity of corresponding eigenfunctions is proved. The proof also proceeds by Campanato theory and bootstrap, but does not use the notion of local quasiminimality. It proves existence and regularity simultaneously via a penalization procedure and therefore follows a somewhat different strategy than \cite{Le,HeLePr}.

%%%%%%%%%%%%%%%%%%

\section{Local quasiminimizers}\label{sec:quasimin}

In this section we prove Proposition \ref{quasimin}.

\begin{proof}
	We shall show that for any $M>0$ and any $\beta>0$ there is a constant $C_{M,\beta}$ with the following property: if $\Omega\subset\R^d$ is a quasi-open volume constrained minimizer for $\Lambda$ and if $u$ is an $L^2$-normalized eigenfunction of $\Lambda(\Omega)$, extended by zero to $\R^d$, then $u$ is a local quasiminimizer of $J_f$ with $f:=\Lambda(\Omega) u$ with parameters $(C_{M,\beta}|\Omega|^{-1-2m/d},\beta |\Omega|^{-2m/d}, M|\Omega|^{1/d})$.
	
	Let $a\in\R^d$, $r\in\R_+$ and $v\in V(\R^d)$ with $v-u\in V(B_r(a))$. The definition of the energy space for quasi-open sets implies that $v\in V(\Omega\cup B_r(a))$. Then, by homogeneity \eqref{eq:scaling} and by minimality of $\Omega$,
	\begin{align*}
		\frac{\int_{\R^d} \tau(v)\,dx}{\int_{\R^d} |v|^2\,dx} & \geq \Lambda(\Omega\cup B_r(a))
		= \left( \frac{|\Omega|}{|\Omega\cup B_r(a)|}\right)^{2m/d}  \,\Lambda \left( \frac{|\Omega|^{1/d}}{|\Omega\cup B_r(a)|^{1/d}}\, (\Omega\cup B_r(a)) \right) \\
		& \geq \left( \frac{|\Omega|}{|\Omega\cup B_r(a)|}\right)^{2m/d} \Lambda(\Omega) \,. 
	\end{align*}
	We have
	$$
	\left( \frac{|\Omega|}{|\Omega\cup B_r(a)|}\right)^{2m/d} = \left( 1 + \frac{| B_r(a)\setminus\Omega|}{|\Omega|} \right)^{-2m/d} \geq 1 - \frac{2m}d \frac{| B_r(a)\setminus\Omega|}{|\Omega|} \geq 1 - \frac{2m}d \frac{|B_1| r^d}{|\Omega|} \,.
	$$
	Thus,
	$$
	\int_{\R^d} (\tau(v) - \Lambda(\Omega) |v|^2)\,dx \geq - \frac{2m}d \frac{|B_1| r^d}{|\Omega|} \Lambda(\Omega) \int_{\R^d} |v|^2\,dx \,.
	$$
	Using $J_f(u) = -\Lambda(\Omega) \int_{\R^d}|u|^2\,dx$, we can bound the left side from above by
	$$
	\int_{\R^d} (\tau(v) - \Lambda(\Omega) |v|^2)\,dx = J_f(v) - J_f(u)- \Lambda(\Omega) \int_{\R^d} |v-u|^2\,dx \leq J_f(v) - J_f(u) \,,
	$$
	so we obtain
	\begin{equation*}
		%\label{eq:quasiminproof}
		J_f(v) - J_f(u) \geq - \frac{2m}d \frac{|B_1| r^d}{|\Omega|} \Lambda(\Omega) \int_{\R^d} |v|^2\,dx \,.
	\end{equation*}
	It remains to bound the $L^2$-norm on the right side. Using the normalization of $u$, we have
	$$
	\left( \int_{\R^d} |v|^2\,dx \right)^{1/2} \leq 1 +  \left( \int_{\R^d} |v-u|^2\,dx \right)^{1/2} \leq 1 + \Lambda(B_r)^{-1/2} \left( \int_{\R^d} \tau(v-u)\,dx \right)^{1/2}.
	$$
	If $r\leq M|\Omega|^{1/d}$ and $\int_{\R^d} \tau(v-u)\,dx \leq \beta |\Omega|^{-2m/d}$, then the right side is bounded by $1 + \Lambda(B_1)^{-1/2} M^m \beta^{1/2}$. This completes the proof with
	$$
	C_{M,\beta} =  \frac{2m}d |B_1| |\Omega|^{2m/d} \Lambda(\Omega) (1+ \Lambda(B_1)^{-1/2} M^m \beta^{1/2})^2 \,.
	$$
	Note that $|\Omega|^{2m/d} \Lambda(\Omega)$ is a constant depending only on $d$, $m$ and $c_0$.
\end{proof}

%%%%%%%%%%%%%%%%%%

\section{Regularity}\label{sec:regularity}

As a preparation for the proof of Proposition \ref{regularity} we record the following basic property of local quasiminimizers. For Problem I it appears in \cite[Lemma 18]{Le}; see also \cite{BuMaPrVe}. We let $\tau(\cdot,\cdot)$ denote the bilinear form associated to the quadratic form $\tau(\cdot)$.

\begin{lemma}\label{quasiminbasic}
	Let $f\in V'(\R^d)$ and let $u\in V(\R^d)$ be a local quasiminimizer for $J_f$ with parameters $(C,c,R)$. Then, for all $a\in\R^d$, $r\in(0,R]$ and $\phi\in V(B_r(a))$,
	$$
	\left| \int_{\R^d} \tau(u,\phi) \,dx - \langle f,\phi \rangle \right| \leq C' r^{d/2} \left( \int_{\R^d} \tau(\phi) \,dx \right)^{1/2}
	$$
	with $C'$ depending only on $C$, $c$ and $R$.
\end{lemma}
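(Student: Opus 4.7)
My plan is to derive the inequality by an ``almost Euler--Lagrange'' argument: test the local quasiminimality of $u$ against the two competitors $v_\pm := u \pm t\phi$, where $t>0$ is a scalar to be chosen. Since $\phi\in V(B_r(a))$, the differences $v_\pm - u = \pm t\phi$ automatically lie in $V(B_r(a))$, and—writing $B := \int_{\R^d}\tau(\phi)\,dx$, which equals $\int_{B_r(a)}\tau(\phi)\,dx$ because $\phi$ extends by zero outside $B_r(a)$—the admissibility condition in the definition of local quasiminimizer becomes simply $t^2 B \leq c$.

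Expanding using bilinearity of $\tau(\cdot,\cdot)$ and linearity of $\langle f,\cdot\rangle$, one gets the exact identity
$$
J_f(u\pm t\phi) - J_f(u) = \pm 2tA + t^2 B,
\qquad A := \int_{\R^d} \tau(u,\phi)\,dx - \langle f,\phi\rangle.
$$
Applying the quasiminimizer inequality $J_f(v_\pm)\geq J_f(u) - Cr^d$ with both choices of sign and combining yields the scalar bound
$$
|2tA| \leq Cr^d + t^2 B,
\qquad \text{valid for every } t \in (0,\sqrt{c/B}\,], \ r\in(0,R].
$$

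The remaining step is to optimize in $t$. Assuming $B>0$ (otherwise $\phi=0$ and the claim is trivial), the natural choice is $t := \min\bigl(\sqrt{Cr^d/B},\,\sqrt{c/B}\bigr)$, which satisfies the admissibility constraint and forces $t^2 B \leq Cr^d$, so that $|A|\leq Cr^d/t$. In the first subcase $Cr^d\leq c$ this evaluates to $|A|\leq \sqrt{C}\,r^{d/2}\sqrt{B}$, which is already of the desired form. In the second subcase $Cr^d>c$, using $r\leq R$ to write $Cr^d \leq CR^{d/2}\cdot r^{d/2}$, I obtain $|A|\leq (CR^{d/2}/\sqrt{c})\,r^{d/2}\sqrt{B}$. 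Both subcases are absorbed by taking $C' := \max\bigl(\sqrt{C},\,CR^{d/2}/\sqrt{c}\bigr)$. I do not anticipate any real obstacle here; this is the routine passage from quasiminimality to an almost Euler--Lagrange variational inequality, and the only mildly awkward point is the case split imposed by the threshold $c$ in the definition of local quasiminimizer—which is precisely the reason $C'$ must depend on $c$ and $R$ in addition to $C$.
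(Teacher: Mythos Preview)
Your proof is correct and follows essentially the same approach as the paper's: test quasiminimality with $v=u\pm t\phi$, expand $J_f$ bilinearly, combine the two sign choices to isolate $|A|$, then choose $t$ optimally subject to the constraint $t^2B\le c$, with the same case split according to whether $Cr^d\le c$ or $Cr^d>c$. The resulting constants match up to inessential numerical factors.
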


\begin{proof}
	We choose $v=u+\gamma\phi$ in the definition of local quasiminimality. The constant $\gamma$ is chosen such that
	$$
	\gamma^2  \int_{B_r(a)} \tau(\phi)\,dx \leq\alpha \,,
	$$
	which guarantees the required bound on $v-u$. We deduce that
	$$
	\gamma^2 \int_{\R^d} \tau(\phi)\,dx +2\gamma \int_{\R^d} \tau(u,\phi)\,dx - 2 \gamma \langle f,\phi \rangle  = J(u+\gamma\phi) - J(u)  \geq - C r^d \,.
	$$
	Optimizing over the sign of $\gamma$, this leads to
	$$
	\left| \int_{\R^d} \tau(u,\phi)\,dx - \langle f,\phi \rangle \right| \leq \frac12 |\gamma|^{-1} C r^d + \frac12 |\gamma|  \int_{\R^d} \tau(\phi)\,dx \,.
	$$
	If $Cr^d \leq \alpha$, we choose $\gamma^2 = Cr^d (\int_{\R^d} \tau(\phi)\,dx )^{-1}$ and obtain the claimed bound with $C'=1$. If $Cr^d>\alpha$, we choose $\gamma^2 = \alpha (\int_{\R^d} \tau(\phi)\,dx )^{-1}$ and obtain the claimed bound with constant $\frac12 \alpha^{-1/2} C r^{d/2} + \frac12 \alpha^{1/2} r^{-d/2} \leq \alpha^{-1/2} C r^{d/2} \leq \alpha^{-1/2} C R^{d/2}=:C'$.
\end{proof}

We now turn to the proof of the regularity result.

\begin{proof}[Proof of Proposition \ref{regularity}]
	Let $u$ be a local quasiminimizer of $J_f$ with parameters $(C,c,R)$. Let $a\in\R^d$ and $r\in(0,R]$. A simple compactness argument shows that the infimum
	$$
	\inf\left\{ \int_{B_r(a)} \tau(z-u)\,dx :\ z\in V(B_r(a)) \right\}
	$$
	is attained by some $w$; see for example \cite[Corollary 3.46]{GiMa} in the setting of Problem II.

	For any $\rho\in(0,r]$, we use
	\begin{equation}
		\label{eq:regproof}
		\int_{B_\rho(a)} \tau(u)\,dx \leq 2 \int_{B_\rho(a)} \tau(u-w) \,dx + 2 \int_{B_\rho(a)} \tau(w) \,dx
	\end{equation}
	and bound separately the two terms on the right side.
	
	For the first term, we use the fact that the function $v:=u-w$ satisfies
	\begin{equation}
		\label{eq:regularityequation}
		\int_{B_r(a)} \tau(z,v)\,dx = 0 
		\qquad\text{for all}\ z\in V(B_r(a)) \,.
	\end{equation}
	Indeed, this is the Euler equation of the minimization problem defining $w$. As a consequence of solving this equation, we have
	\begin{equation}
		\label{eq:decayharmonic}
			\int_{B_\rho(a)} \tau(v)\,dx \leq C'' \left( \frac{\rho}{r} \right)^d \int_{B_r(a)} \tau(v)\,dx
		\qquad\text{for all}\ \rho\in(0,r]
	\end{equation}
	with a constant $C''$ that only depends on $d$, $m$ and the parameter $c_0$. Indeed, for Problem I this follows from \cite[Eq.~(3.2)]{GiMo}, for Problem II from \cite[Proposition 5.8]{GiMa} and for Problem III from Lemma \ref{harmonicstokes} below. Moreover, by minimality of $w$ we can bound
	$$
	\int_{B_r(a)} \tau(v)\,dx = \int_{B_r(a)} \tau(u-w)\,dx \leq \int_{B_r(a)} \tau(u)\,dx \,.
	$$
	
	To bound the second term on the right side of \eqref{eq:regproof}, we extend the integral over $B_r(a)$ and use the equation satisfied by $v$. In \eqref{eq:regularityequation} we choose $z=w$ and obtain
	\begin{align*}
		\int_{B_r(a)} \tau(w)\,dx & = - \int_{B_r(a)} \tau(u,w)\,dx \\
		& = - \int_{B_r(a)} (\tau(u,w) - f\cdot w)\,dx - \int_{B_r(a)} f\cdot w\,dx \,.
	\end{align*}
	For the first term we have, according to Lemma \ref{quasiminbasic},
	$$
	- \int_{B_r(a)} (\tau(u,w) - f\cdot w)\,dx \leq C' r^{d/2} \left( \int_{B_r(a)} \tau(w)\,dx \right)^{1/2},
	$$
	while for the second term we have
	\begin{align*}
		- \int_{B_r(a)} f\cdot w\,dx & \leq \left( \int_{B_r(a)} |f|^2\,dx \right)^{1/2} \left( \int_{B_r(a)} |w|^2\,dx \right)^{1/2} \\
		& \leq \Lambda(B_1)^{-1/2} r^{m+\lambda/2} \|f\|_{L^{2,\lambda}} \left( \int_{B_r(a)} \tau(w) \,dx \right)^{1/2}.
	\end{align*}
	(We choose $r_0=R$ in the definition of the Morrey norm.) Thus, we have shown that
	\begin{equation}
		\label{eq:regularityproof2}
			\int_{B_r(a)} \tau(w)\,dx \leq 2 C' r^d + 2 \Lambda(B_1)^{-1} r^{2m+\lambda} \|f\|_{L^{2,\lambda}}^2 \,.
	\end{equation}
	
	Combining the bounds on the first and second term on the right side of \eqref{eq:regproof}, we see that $\Phi(\rho):= \int_{B_\rho(a)} \tau(u)\,dx$ satisfies
	\begin{equation}
		\label{eq:iterationineq}
		\Phi(\rho) \leq 2 C'' \left( \frac\rho r \right)^d \Phi(r) + 4 C' r^d + 4 \Lambda(B_1)^{-1} r^{2m+\lambda} \|f\|_{L^{2,\lambda}}^2 \,.
	\end{equation}
	This inequality is valid for all $0<\rho\leq r\leq R$. We want to apply the iteration lemma \cite[Lemma 5.13]{GiMa} and the crucial assumption there is that the exponent of $\rho$ in front of the $\phi$-term on the right side is strictly larger than the exponent of $r$ in the remaining terms on the right side. Thus, let $\mu=\lambda+2m$ if $\lambda+2m<d$ and fix any $\mu<d$ if $\lambda+2m\geq d$. Then the above inequality implies that, for all $0<\rho\leq r\leq R$,
	$$
	\Phi(\rho) \leq 2 C'' \left( \frac\rho r \right)^d \Phi(r) + \left( 4 C' R^{d-\mu} + 4 \Lambda(B_1)^{-1} R^{2m+\lambda-\mu} \|f\|_{L^{2,\lambda}}^2 \right) r^\mu \,.
	$$
	The iteration lemma yields a constant $C'''$, depending only on $d$, $m$, $c_0$ and $\mu$, such that, for all $0<\rho\leq r\leq R$,
	$$
	\Phi(\rho) \leq C''' \left( r^{-\mu} \Phi(r) + \left( 4 C' R^{d-\mu} + 4 \Lambda(B_1)^{-1} R^{2m+\lambda-\mu} \|f\|_{L^{2,\lambda}}^2 \right) \right) \rho^\mu \,.
	$$
	In this inequality we may choose $r=R$ and deduce that $D^m u\in L^{2,\mu}$, as claimed.
\end{proof}

It remains to prove the following fact about `Stokes-harmonic' functions.

\begin{lemma}\label{harmonicstokes}
	Let $R>0$, $v\in H^1(B_R,\R^d)$ and $p\in L^2(B_R)$ be such that
	$$
	\begin{cases}
		-\Delta v + \nabla p = 0 & \text{in}\ B_R \,,\\
		\nabla\cdot v = 0  & \text{in}\ B_R \,.
	\end{cases}
	$$
	Then
	$$
	\int_{B_r} |\nabla v|^2\,dx \lesssim \left( \frac{r}{R} \right)^d \int_{B_R} |\nabla v|^2\,dx
	\qquad\text{for all}\ 0<r\leq R \,,
	$$
	with an implied constant that depends only on $d$.
\end{lemma}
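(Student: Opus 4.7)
The plan is to derive the decay from an interior $L^\infty$ bound on $\nabla v$. By the Stokes scaling $v(x)\mapsto v(Rx)$, $p(x)\mapsto R\,p(Rx)$ one reduces to $R=1$. The inequality is trivial for $1/2\leq r\leq 1$ (absorb a factor $2^d$ into the constant), so the task reduces to the range $0<r\leq 1/2$, where the elementary bound
\[
\int_{B_r}|\nabla v|^2\,dx \leq |B_r|\,\|\nabla v\|_{L^\infty(B_{1/2})}^2
\]
shows it suffices to prove $\|\nabla v\|_{L^\infty(B_{1/2})}^2 \lesssim \int_{B_1}|\nabla v|^2\,dx$.

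The key structural observation is that $v$ is \emph{biharmonic}. Taking the divergence of $-\Delta v+\nabla p=0$ and using $\nabla\cdot v=0$ yields $\Delta p=0$; feeding this back into $\Delta v=\nabla p$ and applying $\Delta$ once more gives $\Delta^2 v=0$. Consequently each partial derivative $\partial_j v_i$ is again biharmonic in $B_1$, and the required pointwise bound reduces to the standard interior $L^\infty$--$L^2$ estimate for biharmonic functions: $\|f\|_{L^\infty(B_{1/2})}^2 \lesssim \|f\|_{L^2(B_1)}^2$ whenever $f$ is biharmonic in $B_1$. Apply this componentwise with $f=\partial_j v_i$.

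The auxiliary biharmonic estimate itself can be obtained by iterating Caccioppoli-type inequalities on a chain of concentric balls (derivatives of biharmonic functions are again biharmonic, so one gains $L^2$ control on derivatives of every order) and then invoking the Sobolev embedding $H^k\hookrightarrow L^\infty$ for $k>d/2$. This is a routine piece of higher-order elliptic regularity.

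The one technical point needing care is justifying that the weak solution $(v,p)\in H^1(B_R)\times L^2(B_R)$ is smooth enough on the interior for the pointwise identity $\Delta^2 v=0$ to be meaningful. This is standard Stokes regularity: the pressure $p$ is weakly harmonic, hence smooth by Weyl's lemma, after which $-\Delta v=-\nabla p$ has a smooth right-hand side and classical elliptic regularity upgrades $v$ to a smooth function on $B_{1-\epsilon}$ for any $\epsilon>0$. The only quantitative ingredient is the biharmonic $L^\infty$--$L^2$ bound; everything else is scaling and bookkeeping. The mild obstacle, then, is exclusively the smoothness justification, which is standard and causes no genuine difficulty.
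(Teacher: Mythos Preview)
Your argument is correct. Both proofs reduce to the same interior sup bound $\sup_{B_{R/2}}|\nabla v|^2 \lesssim R^{-d}\int_{B_R}|\nabla v|^2\,dx$, but reach it by different routes. The paper introduces the vorticity $\omega_{ij}=\partial_i v_j-\partial_j v_i$, observes that $\omega_{ij}$ is \emph{harmonic} (taking the curl of $-\Delta v+\nabla p=0$ kills the pressure), uses the identity $-\Delta v_i=\sum_j\partial_j\omega_{ij}$ coming from $\nabla\cdot v=0$, and then combines interior estimates for harmonic functions (applied to $\omega$) with an $L^\infty$ estimate for Poisson's equation (applied to $\partial_k v_i$). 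You instead eliminate the pressure by noting that $\Delta p=0$ forces $\Delta^2 v=0$, and then invoke a single biharmonic $L^\infty$--$L^2$ interior estimate for $\partial_j v_i$. Your route is more streamlined---one fourth-order equation and one estimate---while the paper's decomposition stays entirely within second-order theory and can cite ready-made results from Gilbarg--Trudinger. The smoothness justification you flag (Weyl for $p$, then elliptic regularity for $v$) is indeed routine and suffices to make the biharmonic identity rigorous on interior balls.
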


\begin{proof}
	We shall prove that
	$$
	\sup_{B_{R/2}} |\nabla v|^2 \lesssim R^{-d} \int_{B_R} |\nabla v|^2\,dx \,.
	$$
	This immediately implies the claimed inequality for $r\in(0,R/2]$, and the bound for $r\in(R/2,R]$ follows trivially.
	
	Let $\omega_{ij}:=\partial_i v_j - \partial_j v_i$. Since $\nabla\cdot v=0$, we have
	$$
	-\Delta v_i = \sum_j \partial_j \omega_{ij}
	\qquad\text{in}\ B_R \,.
	$$
	Differentiating this equation and applying a simple $L^\infty$ estimate for Poisson's equation (see \cite[Theorem 8.17]{GiTr} for a much more general result), we find
	$$
	\sup_{B_{R/2}} \, (\partial_k v_i)^2 \lesssim R^{-d} \int_{B_{3R/4}} (\partial_k v_i)^2\,dx + R^4 \sup_{B_{3R/4}} \left(  \sum_j \partial_{jk}^2 \omega_{ij} \right)^2 \,.
	$$
	
	Next, the equation $-\Delta v +\nabla p =0$ implies
	$$
	-\Delta \omega_{ij} = 0
	\qquad\text{in}\ B_R \,,
	$$
	and, consequently, by bounds for harmonic functions \cite[Theorems 2.1 and 2.10]{GiTr},
	$$
	\sup_{B_{3R/4}} |\nabla^2 \omega_{ij}|^2 \lesssim R^{-d-4} \int_{B_R} \omega_{ij}^2\,dx \,.
	$$
	The claimed bound follows by noting that $\omega_{ij}^2\lesssim |\nabla v|^2$.
\end{proof}	

In the following proposition we improve the conclusion of Proposition \ref{regularity} in case $\lambda+2m\geq d$ to a BMO-type condition. For a (scalar or non-scalar) function $f$ we write
$$
(f)_{a,r} := |B_r(a)|^{-1} \int_{B_r(a)} f(x)\,dx \,.
$$

\begin{proposition}\label{regularitybmo}
	If $\lambda +2m \geq d$ in the setting of Proposition \ref{regularity}, then
	\begin{equation}
		\label{eq:regularitybmo}
			\sup_{0<r\leq r_0,\, a\in\R^d} r^{-d} \int_{B_r(a)} |D^m u(x) - (D^m u)_{a,r}|^2\,dx <\infty
	\end{equation}
\end{proposition}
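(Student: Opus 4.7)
The plan is to iterate a Campanato-type inequality for
$$
\Psi(\rho) := \int_{B_\rho(a)} |D^m u - (D^m u)_{a,\rho}|^2 \, dx
$$
of the form
$$
\Psi(\rho) \leq C_1 (\rho/r)^{d+2} \Psi(r) + C_2 r^d
\qquad\text{for all}\ 0<\rho\leq r\leq R,
$$
and then to apply the iteration lemma \cite[Lemma 5.13]{GiMa} with $\alpha=d+2>d=\beta$ to obtain $\Psi(\rho)\leq C\rho^d$ uniformly in $a\in\R^d$. This is precisely the claim \eqref{eq:regularitybmo}.

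To derive the inequality I would reuse the splitting $u=v+w$ on $B_r(a)$ from the proof of Proposition~\ref{regularity}: $w$ minimizes $\int_{B_r(a)} \tau(\,\cdot\,-u)\,dx$ over $V(B_r(a))$, and $v:=u-w$ solves the Euler equation~\eqref{eq:regularityequation}. Since $(D^m u)_{a,\rho}$ is the $L^2(B_\rho)$-best constant approximation to $D^m u$, using $(D^m v)_{a,\rho}$ as a competitor and decomposing $D^m u=D^m v+D^m w$ gives
$$
\Psi(\rho) \leq 2\int_{B_\rho(a)}|D^m v-(D^m v)_{a,\rho}|^2\,dx + 2\int_{B_\rho(a)}|D^m w|^2\,dx.
$$
The second term is controlled via $|D^m w|^2\lesssim \tau(w)$ and \eqref{eq:regularityproof2}: the hypothesis $\lambda+2m\geq d$ together with $r\leq R$ converts the $r^{2m+\lambda}$ tail into a constant multiple of $r^d$, so $\int_{B_r(a)}|D^m w|^2\leq C r^d$.

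The essential new input is a higher-order Campanato decay for $v$ itself, namely
$$
\int_{B_\rho(a)}|D^m v-(D^m v)_{a,\rho}|^2\,dx \leq C\left(\frac{\rho}{r}\right)^{d+2}\int_{B_r(a)}|D^m v-(D^m v)_{a,r}|^2\,dx.
$$
This is the oscillation-analogue of the energy decay \eqref{eq:decayharmonic} already used in Proposition~\ref{regularity}. It follows by applying interior $L^\infty$ estimates to $\nabla(D^m v)$ after subtracting from $v$ a polynomial of degree $m$ matching $(D^m v)_{a,r}$ (both $v$ and this shifted function solve the same translation- and polynomial-invariant Euler equation). For Problem~I this is classical for the polyharmonic system (see \cite[Chapter 5]{GiMa}); for Problem~II it is the analogue for the Lam\'e system of \cite[Proposition 5.8]{GiMa}; for Problem~III it is obtained by iterating the argument of Lemma~\ref{harmonicstokes} one derivative deeper, using that $\partial_k v$ is itself Stokes-harmonic with pressure $\partial_k p$, which yields an $L^\infty$ bound on $\nabla^2 v$, and then applying the elementary mean-value inequality $\int_{B_\rho}|g-(g)_{a,\rho}|^2\lesssim \rho^{d+2}\|\nabla g\|_{L^\infty(B_\rho)}^2$ to $g=\nabla v$.

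To close the iteration, I would finally bound the oscillation of $D^m v$ on $B_r$ in terms of $\Psi(r)$ via $D^m v=D^m u-D^m w$, giving
$$
\int_{B_r(a)}|D^m v-(D^m v)_{a,r}|^2\,dx \leq 2\Psi(r) + 2\int_{B_r(a)}|D^m w|^2\,dx \leq 2\Psi(r) + C r^d,
$$
and assemble the three estimates, absorbing the cross term $(\rho/r)^{d+2}\cdot r^d\leq r^d$ into the $C_2 r^d$ piece. The main obstacle is the oscillation decay for $v$ in Problem~III, which is not literally stated in Lemma~\ref{harmonicstokes} but is essentially contained in its proof upon taking one more derivative; the other two cases are entirely classical Campanato theory.
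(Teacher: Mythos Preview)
Your proposal is correct and follows essentially the same route as the paper. Both proofs split $u=v+w$ on $B_r(a)$ as in Proposition~\ref{regularity}, invoke the $(\rho/r)^{d+2}$ oscillation decay for the homogeneous part $v$ (the paper states this as the analogue of \eqref{eq:decayharmonic}, ``proved in essentially the same way''), control the $w$-contribution through \eqref{eq:regularityproof2} together with $r^{2m+\lambda}\leq R^{2m+\lambda-d}r^d$, close back to $\Psi(r)$ via the triangle inequality, and finish with the iteration lemma \cite[Lemma~5.13]{GiMa}. The only cosmetic difference is that the paper phrases the decomposition in terms of the tensor $Tu$ with $|Tu|^2=\tau(u)$, whereas you work directly with $D^m u$; since $D^m u$ is a sub-tensor of $Tu$ and $|D^m w|^2\leq\tau(w)$ in all three problems, both formulations are equivalent.
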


\begin{proof}
	We only explain the main changes compared to the proof of Proposition \ref{regularity}. Again, let $u$ be a local quasiminimizer for $J_f$ with parameters $(C,c,R)$ and again fix $a\in\R^d$ and $r\in(0,R]$ and define $w$ as before. Instead of \eqref{eq:regproof} we now bound, for any $\rho\in(0,r]$
	\begin{align}
		\label{eq:regproofbmo}
			\int_{B_\rho(a)} |Tu-(Tu)_{a,\rho}|^2 \,dx & \leq 2 \int_{B_\rho(a)} |T(w+u)-(T(w+u))_{a,\rho}|^2 \,dx \notag \\
			& \quad + 2 \int_{B_\rho(a)} |Tw-(Tw)_{a,\rho}|^2 \,dx \notag \\
			& \leq 2 \int_{B_\rho(a)} |T(w+u)-(T(w+u))_{a,\rho}|^2 \,dx \notag \\
			& \quad + 2 \int_{B_\rho(a)} \tau(w) \,dx \,.
	\end{align}
	Here $Tu$ is a tensor satisfying $|Tu|^2 = \tau(u)$ pointwise. More precisely, in Problem~I we set $Tu = D^m u$, that is, $Tu$ has entries $T_{i_1,\ldots,i_d} u = \partial_{i_1}\cdots\partial_{i_d} u$ parametrized by $i\in\{1,\ldots,d\}^m$. In Problem III, $T_{ij} u = \partial_i u_j$ for $i,j\in\{1,\ldots,d\}$. In Problem II, $Tu$ is as in Problem III, but there is one more entry, given by $\sqrt{c_0} \nabla\cdot u$.
	
	To bound the first term on the right side of \eqref{eq:regproofbmo} we set $v:=u-w$ and use, instead of \eqref{eq:decayharmonic}, the bound
	\begin{equation*}
		%\label{eq:decayharmonic}
		\int_{B_\rho(a)} |Tv - (Tv)_{a,\rho}|^2 \,dx \leq \underline C'' \left( \frac{\rho}{r} \right)^{d+2} \int_{B_r(a)} |Tv-(Tv)_{a,r}|^2 \,dx
		\qquad\text{for all}\ \rho\in(0,r] \,,
	\end{equation*}
	which is proved in essentially the same way as \eqref{eq:decayharmonic}. We bound the right side by
	\begin{align*}
		& 2 \underline C'' \left( \frac{\rho}{r} \right)^{d+2} \int_{B_r(a)} |Tu-(Tu)_{a,r}|^2 \,dx
		+ 2 \underline C'' \left( \frac{\rho}{r} \right)^{d+2} \int_{B_r(a)} |Tw-(Tw)_{a,r}|^2 \,dx \\
		\leq \, & 2 \underline C'' \left( \frac{\rho}{r} \right)^{d+2} \int_{B_r(a)} |Tu-(Tu)_{a,r}|^2 \,dx
		+ 2 \underline C'' \int_{B_r(a)} \tau(w) \,dx \,.
	\end{align*}
	The last term here is of the same form as the second term on the right side of \eqref{eq:regproofbmo}.
	
	For the second term on the right side of \eqref{eq:regproofbmo} we use the same bound \eqref{eq:regularityproof2} as before. 
	
	We see that $\Phi(\rho):= \int_{B_\rho(a)} |Tu - (Tu)_{a,\rho}|^2\,dx$ satisfies
	$$
	\Phi(\rho) \leq 4 \underline C'' \left( \frac{\rho}{r} \right)^{d+2} \Phi(r) + 4(1+\underline C'') C' r^d + 4(1+\underline C'')\Lambda(B_1)^{-1} r^{2m+\lambda} \|f\|_{L^{2,\lambda}}^2 \,.
	$$
	In view of the assumption $\lambda+2m>d$, we can bound in the last term $r^{2m+\lambda} \leq R^{2m+\lambda -d} r^d$. The assertion of the proposition now follows from the iteration lemma \cite[Lemma 5.13]{GiMa} as in the proof of Proposition \ref{regularity}.
\end{proof}

%%%%%%%%%%%%%%%%%%%%%%%%

\section{Proof of the main result}\label{sec:proofmain}

Our goal in this section is to deduce Theorem \ref{main} from Propositions \ref{quasimin} and \ref{regularity}. We begin with a simple fact about functions whose derivatives belong to a Morrey space. We emphasize that $L^{2,d}(\R^d)=L^\infty(\R^d)$.

\begin{proposition}\label{campa}
	Let $m\in\N_0$ and $0\leq\mu\leq d$ and assume that $D^m u \in L^{2,\mu}(\R^d)$. If $\mu+2m>d$ we also assume that $u\in L^{2,0}(\R^d)$. Then
	$$
		u\in L^{2,\lambda}(\R^d)
		\qquad
		\begin{cases} 
			\text{with}\ \lambda = \mu+2m & \text{if}\ \mu+2m<d \,, \\
			\text{with any}\ \lambda<d & \text{if}\ \mu+2m=d \,, \\
			\text{with}\ \lambda = d & \text{if}\ \mu+2m> d \,.
		\end{cases}
	$$
\end{proposition}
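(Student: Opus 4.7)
The plan is a standard Campanato-type argument: on each ball approximate $u$ by its $L^2$-projection onto polynomials of degree less than $m$, and then iterate the resulting estimate across a dyadic family of balls centered at each point. For $a \in \R^d$ and $r \in (0, r_0]$, let $P_{a,r}$ denote the $L^2(B_r(a))$-orthogonal projection of $u$ onto the space $\mathcal{P}_{m-1}$ of polynomials of degree less than $m$ (with $P_{a,r}=0$ in the case $m=0$). The generalized Poincar\'e inequality gives
\[
\int_{B_r(a)} |u - P_{a,r}|^2 \,dx \leq C r^{2m} \int_{B_r(a)} |D^m u|^2 \,dx \leq C r^{2m+\mu} \|D^m u\|_{L^{2,\mu}}^2 \,.
\]

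The heart of the argument is to compare the polynomial approximants across dyadic scales $r_k := 2^{-k} r_0$. Writing $P_k := P_{a,r_k}$, the containment $B_{r_{k+1}} \subset B_{r_k}$ and the triangle inequality give $\int_{B_{r_{k+1}}(a)} |P_{k+1} - P_k|^2 \,dx \leq C r_k^{2m+\mu}$. Since $P_{k+1} - P_k \in \mathcal{P}_{m-1}$, the equivalence of $L^2$ and $L^\infty$ norms on this finite-dimensional space upgrades this to
\[
\|P_{k+1} - P_k\|_{L^\infty(B_{r_{k+1}}(a))}^2 \leq C r_{k+1}^{-d} \int_{B_{r_{k+1}}(a)} |P_{k+1} - P_k|^2 \,dx \leq C r_k^{2m+\mu-d} \,.
\]
Telescoping then yields $\|P_K - P_0\|_{L^\infty(B_{r_K}(a))} \leq C \sum_{k=0}^{K-1} r_k^{(2m+\mu-d)/2}$, while the initial anchor is controlled uniformly in $a$ by $\|P_0\|_{L^\infty(B_{r_0}(a))} \leq C r_0^{-d/2} \|u\|_{L^2(B_{r_0}(a))} \leq C r_0^{-d/2} \|u\|_{L^{2,0}}$, which is where the hypothesis $u \in L^{2,0}(\R^d)$ enters.

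The three conclusions follow by tracking the dyadic sum in the three regimes. When $\mu+2m<d$, the series is geometric and dominated by its last term, so $\|P_K\|_{L^\infty(B_{r_K}(a))}^2 \lesssim 1 + r_K^{2m+\mu-d}$ and therefore $\|P_K\|_{L^2(B_{r_K}(a))}^2 \lesssim r_K^d + r_K^{2m+\mu} \lesssim r_K^{2m+\mu}$, the factor $r_K^{d-2m-\mu}$ being absorbed via $r_K \leq r_0$. When $\mu+2m=d$, the sum grows only as $\log(r_0/r_K)$, and since any positive power of $r_K$ dominates a power of the logarithm for $r_K \leq r_0$, one obtains $\|P_K\|_{L^2(B_{r_K}(a))}^2 \lesssim r_K^d (\log(r_0/r_K))^2 \lesssim r_K^\lambda$ for any $\lambda<d$. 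When $\mu+2m>d$, the series converges uniformly, $\|P_K\|_{L^\infty(B_{r_K}(a))}$ remains bounded in $K$ and $a$, and $\|P_K\|_{L^2(B_{r_K}(a))}^2 \lesssim r_K^d$, yielding $u \in L^{2,d} = L^\infty$. Adding the Poincar\'e bound on $u-P_K$ and passing from dyadic radii to arbitrary $r \in (0,r_0]$ via $B_r \subset B_{r_K}$ whenever $r_{K+1} \leq r \leq r_K$ completes the proof. The main subtlety is the subcritical case $\mu+2m<d$: the polynomials $P_k$ need not converge as $k\to\infty$, but they grow at precisely the rate $r_K^{(2m+\mu-d)/2}$ needed so that $r_K^d \|P_K\|_{L^\infty}^2$ matches the target decay $r_K^{2m+\mu}$.
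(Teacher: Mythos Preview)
Your argument is correct and takes a genuinely different route from the paper. The paper proceeds by induction on $m$: at each step it applies the first-order Poincar\'e inequality to pass from an $L^{2,\nu}$ bound on $D^{k}u$ to an oscillation bound on $D^{k-1}u$, and then invokes the Campanato--Morrey isomorphism (\cite[Proposition~5.4]{GiMa}) to conclude $D^{k-1}u\in L^{2,\nu+2}$; in the supercritical case it additionally quotes Campanato's theorem to obtain H\"older continuity and combines this with the $L^{2,0}$ assumption to get $L^\infty$. You instead work directly at order $m$, subtracting the full polynomial of degree $<m$ and running an explicit dyadic iteration on the $L^\infty$ norms of the successive polynomial approximants. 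Your approach is more self-contained---it effectively re-proves the Campanato--Morrey isomorphism and Campanato's theorem in the precise form needed---while the paper's is more modular, delegating those steps to the literature.

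One point worth flagging: your ``initial anchor'' step uses $u\in L^{2,0}(\R^d)$ in all three regimes, whereas the statement only assumes it when $\mu+2m>d$. This is not a flaw in your proof but in the statement: without some uniform control at scale $r_0$ the proposition is false in the subcritical case as well (take $u(x)=x_1$ with $m=1$, $\mu=0$, $d\geq 3$; then $Du\in L^{2,0}$ but $u\notin L^{2,2}$). The paper's argument has the same hidden dependence, since the Campanato--Morrey isomorphism on $\R^d$ requires the $L^2$ norm at scale $r_0$ as part of the Campanato norm. In the intended application $u$ is an $L^2(\R^d)$ eigenfunction, so the assumption is always available.
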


\begin{proof}
	\emph{Step 1.} We begin by proving the assertion in the case $\mu+2m>d$.
	
	After decreasing $\mu$ if necessary, we may assume that $\mu+2m<d+2$. We claim that for every $k\in\N_0$ with $\mu+2k<d$, we have
	\begin{equation}
		\label{eq:morreyd}
			\int_{B_r(a)}  \left| D^{m-k} u(x) \right|^2\,dx \lesssim r^{\mu+2k} \\
			\qquad \text{for all}\ 0<r\leq r_0 \,,\ a\in\R^d \,.
	\end{equation}
	This can be shown by induction. Indeed, for $k=0$ this holds by assumption, and if $k\geq 1$ satisfies $\mu+2k<d$ and $D^{m-k+1}u\in L^{2,\mu+2(k-1)}$, then by Poincar\'e's inequality
	\begin{equation}
		\label{eq:poincare}
			\int_{B_r(a)}  \left| D^{m-k} u(x) - (D^{m-k})_{a,r} \right|^2\,dx \lesssim r^{\mu+2k} \qquad \text{for all}\ 0<r\leq r_0 \,,\ a\in\R^d \,.
	\end{equation}
	Since $\mu+2k<d$, this bound implies by (a variant of) \cite[Proposition~5.4]{GiMa} that \eqref{eq:morreyd} holds, thus completing the inductive proof of \eqref{eq:morreyd}.
	
	Since we are assuming $\mu+2m<d+2$, we can choose $k=m-1$ in \eqref{eq:morreyd}. Applying once again Poincar\'e's inequality, we obtain \eqref{eq:poincare} with $k=m$. Since $\mu+2m>d$, Campanato's theorem (see, e.g., \cite[Theorem 5.5]{GiMa}) implies that $u\in C^{0,(\mu+2m-d)/2}$. More precisely, we infer that
	$$
	|u(x)-u(y)| \lesssim |x-y|^{(\mu+2m-d)/2}
	\qquad\text{for all}\ x,y\in\R^d \ \text{with}\ |x-y|\leq r_0 \,.
	$$
	Using this and the assumption $u\in L^{2,0}(\R^d)$ we obtain
	\begin{align*}
		|u(x)|^2 \leq 2 |B_{r_0}(x)|^{-1} \int_{B_{r_0}(x)} |u(y)|^2 \,dy + 2  |B_{r_0}(x)|^{-1} \int_{B_{r_0}(x)} |u(x)-u(y)|^2 \,dy \lesssim 1 \,,
	\end{align*}
	that is, $u\in L^\infty = L^{2,d}$, as claimed.
	
	\medskip
	
	\emph{Step 2.} We argue by induction on $m$, the case $m=0$ being trivial. Thus let $m\geq 1$ and assume that the assertion for $m-1$ has already been proved for any $0\leq\mu\leq d$. The assertion for $\mu+2m>d$ follows from Step 1. Moreover, once we have shown the assertion in case $\mu+2m<d$, we obtain the assertion for $\mu+2m=d$ by noting that $D^m u\in L^{2,\tilde\mu}$ for any $\tilde\mu<\mu$.
	
	Thus, it remains to deal with the case $\mu+2m<d$. We apply Poincar\'e's inequality and arrive at \eqref{eq:poincare} with $k=1$. Since $\mu+2\leq\mu+2m<d$, we can again apply (a variant of) \cite[Proposition~5.4]{GiMa} and infer that \eqref{eq:morreyd} holds with $k=1$, that is, $D^{m-1}u\in L^{2,\mu+2}$. The assertion now follows by the induction hypothesis (with $\mu$ replaced by $\mu+2$).
\end{proof}

\begin{proof}[Proof of Theorem \ref{main}]
	Let $\Omega\subset\R^d$ be a quasi-open volume constrained minimizer of $\Lambda$ and let $u$ be a corresponding $L^2$-normalized eigenfunction, extended by zero to $\R^d$. If we can show that $u\in C^{m-1,\alpha}(\R^d)$ for any $\alpha<1$, then 
	$$
	\tilde\Omega:=\bigcup_{k=0}^{m-1} \{D^k u\neq 0\}
	$$ 
	is an open set that satisfies $\Lambda(\tilde\Omega)\leq\Lambda(\Omega)$ (by taking $u\in H^m_0(\tilde\Omega)$ as a trial function, see \cite[Lemma 10]{Le}) and $|\tilde\Omega|\leq|\Omega|$ (since $\tilde\Omega\subset\Omega$ up to sets of capacity zero), so by minimality both inequalities need to be equalities, and $\Omega$ is open up to a set of measure zero, as claimed.
	
	\medskip
	
	Therefore, it suffices to prove regularity. Our goal is to prove
	\begin{equation}
		\label{eq:goal}
			D^m u \in L^{2,\mu}
			\qquad\text{for all}\ \mu<d \,.
	\end{equation}
	Indeed, once this is shown, we can argue similarly as in the proof of Proposition \ref{campa}. First, by Poincar\'e's	inequality, we obtain \eqref{eq:poincare} with $k=1$ and then, by Campanato's theorem \cite[Theorem 5.5]{GiMa}, $D^{m-1} u\in C^{0,(\mu+2-d)/2}$ when $\mu+2>d$. Since $\mu$ in \eqref{eq:goal} can be taken arbitrarily close to $d$, we obtain the H\"older regularity assertion in the theorem. According to Proposition \ref{campa}, the boundedness assertion for $u$ follows from \eqref{eq:goal} with $\mu+2m>d$. That for $D^ku$ with $1\leq k\leq m-1$ follows similarly. 
	
	It remains to prove \eqref{eq:goal}. We start with the information $u\in L^2(\R^d)\subset L^{2,0}(\R^d)$. By Proposition \ref{quasimin}, $u$ is a local quasiminimizer of the functional $J_f$ with $f\in L^{2,0}$, so by Proposition \ref{regularity} $D^m u\in L^{2,\mu}$ with $\mu=2m$ if $2m<d$ and with any $\mu<d$ if $2m\geq d$. Thus, when $2m\geq d$ we have reached our goal \eqref{eq:goal}. 
	
	In what follows we assume that $2m<d$ and we apply a bootstrap. According to Proposition \ref{campa}, the fact that $D^m u\in L^{2,2m}$ implies that $u\in L^{2,\lambda}$ with $\lambda = 4m$ if $4m<d$, with any $\lambda<d$ if $4m=d$ and with $\lambda=d$ if $4m>d$. In each case we have improved our initial information $u\in L^{2,0}$. Thus, we can iterate the argument, using again that $u$ is a local quasiminimizer of $J_f$, where now $f\in L^{2,\lambda}$. We infer that $D^m u\in L^{2,\mu}$ with $\mu=\lambda+2m$ if $\lambda+2m<d$ and with any $\mu<d$ if $\lambda+2m\geq d$. Recalling the values of $\lambda$, we obtain $\mu=6m$ if $6m<d$ and any $\mu<d$ if $6m\geq d$. Thus, when $6m\geq d$ we have reached our goal \eqref{eq:goal}.
	
	Assuming $6m<d$, we have improved the knowledge $D^m u\in L^{2,2m}$ to $D^m u\in L^{2,6m}$ and using Proposition \ref{campa} we can translate this into an improvement of the Morrey space membership of $u$. Continuing in this way we arrive after finitely many iterations at \eqref{eq:goal}. This completes the proof of Theorem \ref{main}.
\end{proof}

\begin{remark}\label{rem:bmo}
	The proof shows that after finitely many steps we have $u\in L^{2,\lambda+2m}(\R^d)$ and therefore, by Propositions \ref{quasimin} and \ref{regularitybmo}, $D^mu$ satisfies the BMO-type condition \eqref{eq:regularitybmo}. This seems to be the best regularity that can be obtained with our method.
\end{remark}

%%%%%%%%%%%%%%%%%%%%%%%%%%%%%%%%%%%
%%%%%%%%%%%%%%%%%%%%%%%%%%%%%%%%%%%

\subsection*{Acknowledgements}
The author is grateful to R.~Leylekian, A.~Henrot, A.~Lemenant and Y.~Privat for helpful remarks and suggestions.

\subsection*{Funding}
Partial support through the German Research Foundation grants EXC-2111-390814868 and TRR 352-Project-ID 470903074, as well as through the US National Science Foundation grant DMS-1954995 is acknowledged.

%%%%%%%%%%%%%%%%%%%%%%%%%%%%%%%%%%%
%%%%%%%%%%%%%%%%%%%%%%%%%%%%%%%%%%%

\bibliographystyle{amsalpha}

\end{document}